\documentclass[11pt]{amsart}
\usepackage[T1]{fontenc}
\usepackage{lmodern,microtype}
\usepackage{amsmath,amssymb,amsthm}
\usepackage[a4paper,margin=28mm]{geometry}
\usepackage[colorlinks=true,linkcolor=blue,citecolor=blue,urlcolor=blue]{hyperref}

\newtheorem{theorem}{Theorem}
\newtheorem{lemma}{Lemma}[section]
\newtheorem{corollary}[theorem]{Corollary}
\newtheorem{proposition}[lemma]{Proposition}
\theoremstyle{remark}
\newtheorem{remark}[lemma]{Remark}
\newcommand{\C}{\mathbb C}

\newcommand{\Z}{\mathbb Z}
\newcommand{\cO}{\mathcal O}
\newcommand{\Disc}{\operatorname{Disc}}
\newcommand{\rd}{\operatorname{rd}}

\title[One Arithmetic Gadget, Two Problems]{One Arithmetic Gadget, Two Problems:\\
Small Sum-Product Growth and Many Unit Distances}

\author{J\'ozsef Solymosi}

\address{Department of Mathematics,
University of British Columbia,
Vancouver, BC, Canada}

\address{\'Obuda University,
Budapest, Hungary}

\email{solymosi@math.ubc.ca}

\date{}

\begin{document}

\begin{abstract}
There is an absolute constant $c>0$ such that, for every
$0<\varepsilon\leq1$, arbitrarily large finite sets $A\subset\C$ satisfy
\[
 |A+A|\leq |A|^{1+\varepsilon},\qquad
 |AA|\leq |A|^{2-c\varepsilon},\qquad
 \nu_1(A)\geq |A|^{1+c\varepsilon},
\]
where $\nu_1(A)$ counts unordered unit-distance pairs.
We combine the arithmetic directions from the recent unit-distance
construction with the multiplicative enlargement used in the recent
sum-product construction. The arithmetic ingredients are stated as
explicit inputs.
\end{abstract}

\maketitle

\section{Introduction}

For a finite set $A\subset\C$, write
\[
 A+A=\{a+b:a,b\in A\},\qquad
 AA=\{ab:a,b\in A\},
\]
and let
\[
 \nu_1(A)
 =\#\bigl\{\{a,b\}\subset A:|a-b|=1\bigr\}.
\]
Recent constructions show that finite sets of real numbers can have
simultaneous power savings in their sumsets and product sets
\cite{BSSZ}, and that planar sets of $n$ points can determine at least
$n^{1+\eta}$ unit distances for some fixed $\eta>0$
\cite{ABGLSSTWW}. We show that one set of complex numbers can satisfy
both properties.

\begin{theorem}\label{thm:main}
There is an absolute constant $c>0$ such that, for every
$0<\varepsilon\leq1$, there are arbitrarily large finite sets
$A\subset\C$ satisfying
\[
 |A+A|\leq |A|^{1+\varepsilon},\qquad
 |AA|\leq |A|^{2-c\varepsilon},\qquad
 \nu_1(A)\geq |A|^{1+c\varepsilon}.
\]
\end{theorem}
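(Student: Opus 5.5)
The plan is to build $A$ inside the ring of integers $\cO_K$ of a suitable CM number field $K$ of large degree $d$, fixing a complex embedding $K\hookrightarrow\C$, with $A$ a box in a fixed integral basis. The box is what keeps $A+A$ small. Two further ingredients are bolted on: the arithmetic directions of \cite{ABGLSSTWW} supply unit distances, and the multiplicative enlargement of \cite{BSSZ} supplies the saving on $AA$. Concretely, fix an integral basis $\omega_1,\dots,\omega_d$ of $\cO_K$ and a side length $L$, and let the core set be
\[
 B=\Bigl\{\textstyle\sum_i n_i\omega_i : 0\le n_i<L\Bigr\},\qquad |B|=L^d,\qquad |B+B|\le (2L)^d=2^d|B| .
\]
Choosing $L=2^{\lceil 2/\varepsilon\rceil}$ gives $2^d\le |B|^{\varepsilon/2}$, so any $A$ that is a dense subset of a slightly enlarged box (dense meaning $|A|\ge |{\rm box}|^{1-\varepsilon/4}$) still has $|A+A|\le|A|^{1+\varepsilon}$. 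The point of this choice is that every subsequent gain is measured against $\log|A|=d\log L\asymp d/\varepsilon$. Hence any quantity of size $e^{c'd}$ is automatically $|A|^{c\varepsilon}$, and this is exactly where the factor $\varepsilon$ in the exponents comes from.

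For unit distances I would invoke the arithmetic input of \cite{ABGLSSTWW}: a family of CM fields $K$ of growing degree $d$ with root discriminant $\rd(K)$ bounded, together with a set $Z\subset\cO_K$ of $|Z|\ge e^{c_1d}$ elements. Each $z\in Z$ should satisfy $|z|=1$ in the chosen embedding, and all of its coordinates in the basis $\omega_i$ should be bounded by an absolute constant $H$. Typical examples are the elements $\pi/\bar\pi$, for $\pi$ ranging over products of small split primes, after clearing a common denominator absorbed into the scaling. Since $\rd(K)=|\Disc|^{1/d}$ is bounded, a fundamental domain of $\cO_K$ has controlled shape. Each translate $B+z$ therefore meets $B$ in at least $(1-H/L)^d|B|$ points, and this is $\ge |B|/2$ once $L$ is large compared with $dH$. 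If that fails I will shrink the direction set $Z$ rather than enlarge $L$. Summing over $z\in Z$ then gives $\nu_1(B)\ge \tfrac14|Z||B|\ge |B|^{1+c\varepsilon}$.

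The product set is the main obstacle. A plain box has $|BB|$ essentially $|B|^2$, since products of lattice points of height $L$ fill a box of height $\approx dL^2$. I would therefore follow the multiplicative enlargement of \cite{BSSZ} and take
\[
 A=B\cdot S,\qquad S=\{\sigma_1^{e_1}\cdots\sigma_r^{e_r}: 0\le e_j< M\},
\]
where $S$ is a multiplicative box built from $r\asymp d$ elements $\sigma_j\in\cO_K$ of small height (for example the same split primes, or units). Then $AA=(BB)(SS)$, and $|SS|\le 2^r|S|$ is only a $|S|^{1+O(\varepsilon)}$-type loss, while $|BB|\le (dL^2)^d$ is governed by the height. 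The count I want is
\[
 |AA|\le |BB|\,|SS|\ \lesssim\ (dL^2)^d\,2^r|S| ,
\]
and this should be $\le|A|^{2-c\varepsilon}$ precisely because $|A|\approx |B||S|$ carries a full factor $|S|$ in each copy, while $AA$ only pays for one copy of $|S|$. The enlargement must be chosen so that $|S|$ is a power of $|B|$ but $A$ stays a dense subset of a box of height $L\cdot\max_{s\in S}\|s\|$. That is the delicate step. Two things must be controlled: the heights of elements of $S$ (using the bounded root discriminant to compare the coefficient norm with the product of absolute values over the embeddings), and the near-injectivity of $(b,s)\mapsto bs$ (using unique factorization into the chosen prime ideals). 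Only with both in hand can $|A+A|$, computed now from the enlarged box, still be kept below $|A|^{1+\varepsilon}$.

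Unit distances survive the enlargement. Each fiber $B\cdot s$ is an affine image of $B$, and multiplying by $s$ rescales the distances in that fiber by $|s|$. So I will take the $\sigma_j$ with $|\sigma_j|=1$ in the distinguished embedding, and then each fiber inherits the $|Z||B|/4$ unit distances. If I cannot find enough unimodular $\sigma_j$, I will fall back to using only those $\sigma_j$ and accept a smaller $c$. Finally I will take $c$ to be the minimum of the constants arising in the three estimates.
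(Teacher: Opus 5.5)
There is a genuine gap, and it is the step you yourself call delicate. You never produce a multiplicative enlargement that is compatible with a small sumset, and the unimodular choice you settle on does not exist.

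In a CM field, suppose $\sigma\in\cO_K$ has $|\sigma|=1$ in the distinguished embedding. Then $\sigma\overline{\sigma}$ equals $1$ under that injective embedding, so $\sigma\overline{\sigma}=1$. Every conjugate of $\sigma$ then has modulus one, and Kronecker's theorem makes $\sigma$ a root of unity; this covers units as well. So your unimodular $\sigma_j\in\cO_K$, and your fallback, leave $S$ inside the roots of unity, and there is no product saving.

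Clearing denominators is harmless for $Z$, because you may rescale the whole picture, but it is not harmless for $S$. The sumset bound would then require the heights and denominators of all $M^r$ products $\prod_j\sigma_j^{e_j}$ to be bounded uniformly, so that $BS+BS$ lies in a window of size $e^{O(d)}|B|$. You give no mechanism for this. For a full multiplicative box on $r\asymp d$ generators, it would need many principal ideals supported on boundedly many primes with bounded exponents. The arithmetic input (class number only bounded by $C^f$) supplies nothing of the kind.

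There are smaller problems too. Your bound $|BB|\le(dL^2)^d$ exceeds the trivial $|B|^2$, so as written you would need $|S|\ge d^d$. Unique factorization alone does not rule out collisions $bs=b's'$. With $L$ fixed and $d\to\infty$, the overlap factor $(1-H/L)^d$ decays exponentially, and shrinking $Z$ does not repair this. The paper instead accepts the loss \eqref{eq:inner-ratio} and takes $|U|\ge e^{Hf}$ with $H>\kappa+1$.

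The missing idea is to let the direction set $U=U_m$ itself be the multiplier. Its elements lie in $W_D(1)$ with $D=\ell^{2m}$ fixed, so all conjugates have modulus one and the denominator is bounded. The product saving comes not from a multiplicative box but from the label-fibre bound in Proposition~\ref{prop:input}(iii). A product of two directions is determined, up to $C^f$ values, by a label sum in $\{0,\dots,2m\}^f$. Hence $|U^2|\le (C(2m+1))^f\le e^{-Hf}|U|^2$ once $m$ is large. This pigeonholed subset of a box is what survives a large class group.

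The paper also uses the sup-norm windows $W_M(R)$ in place of coordinate boxes; your absolute bound on the coordinates of the directions is unsupported. The windows are closed under sums and products. So $W_D(R-1)+U\subset W_D(R)$ by the triangle inequality, and $UB+UB\subset W_{D^2}(2R)$ has size $e^{O(f)}|B|$.

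Finally, $A=U(T+B)$ with a generic integer $T$ makes the $|U|$ rotated copies disjoint. Thus $|A|=|U||B|$, and each copy keeps the unit distances of $B$. The translate costs only the factor $|U+U|\le|W_D(2)|=e^{O(f)}$ in $u(T+x)+v(T+y)=T(u+v)+(ux+vy)$. Your final bookkeeping, $\log|A|\asymp d/\varepsilon$ so that factors $e^{\pm O(d)}$ become $|A|^{\pm O(\varepsilon)}$, does match the paper's choice $R=e^{K/\varepsilon}$.
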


In particular, this implies the formulation with an unspecified
$\delta(\varepsilon)>0$ for every $\varepsilon>0$: apply the theorem
with $\min\{\varepsilon,1\}$.

\begin{corollary}\label{cor:symmetric}
For some absolute $c_0>0$, arbitrarily large finite sets $A\subset\C$
satisfy
\[
 |A+A|+|AA|\leq |A|^{2-c_0},\qquad
 \nu_1(A)\geq |A|^{1+c_0}.
\]
\end{corollary}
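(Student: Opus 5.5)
Corollary~\ref{cor:symmetric} follows from Theorem~\ref{thm:main} by choosing $\varepsilon$ so that the additive and multiplicative exponents balance. I will apply Theorem~\ref{thm:main} with a fixed small $\varepsilon$ that depends only on the absolute constant $c$, and then absorb the constant factor $2$ coming from adding the two bounds.

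Let $c>0$ be the constant from Theorem~\ref{thm:main}. We may assume $c\le 1$, since decreasing $c$ only weakens all three conclusions. Take $\varepsilon=\tfrac12$. This lies in $(0,1]$, so the theorem supplies arbitrarily large sets $A$ with
\[
 |A+A|\le |A|^{3/2},\qquad |AA|\le |A|^{2-c/2},\qquad \nu_1(A)\ge |A|^{1+c/2}.
\]
Because $c\le 1$, we have $2-c/2\ge 3/2$, and therefore
\[
 |A+A|+|AA|\le 2|A|^{2-c/2}.
\]
Now set $c_0=c/4$. For $|A|\ge 2^{4/c}$ we have $2\le |A|^{c/4}$, so
\[
 2|A|^{2-c/2}\le |A|^{2-c/4}=|A|^{2-c_0}.
\]
The unit-distance bound also passes to the smaller exponent: $\nu_1(A)\ge |A|^{1+c/2}\ge |A|^{1+c_0}$.

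The theorem gives arbitrarily large sets, so discarding those with $|A|<2^{4/c}$ still leaves arbitrarily large examples, which proves the corollary with $c_0=c/4$. The argument has no real obstacle. The only points needing care are that $\varepsilon$ must lie in $(0,1]$, and that the factor $2$ is absorbed by giving up half of the exponent saving.
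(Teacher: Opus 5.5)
Your proof is correct and matches the paper's: both apply Theorem~\ref{thm:main} with $\varepsilon=1/2$ and absorb the sum of the two bounds by taking $c_0$ below $\min\{1/2,c/2\}$ once $|A|$ is large. Your reduction to $c\le 1$, together with the explicit choice $c_0=c/4$ and the threshold $|A|\ge 2^{4/c}$, just makes the paper's ``for sufficiently large $n$'' quantitative.
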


\subsection*{The combinatorial idea}

We identify the plane with $\C$, so a unit vector is a complex
number $u$ with $|u|=1$, and multiplication by $u$ is a rotation.
The construction uses a finite set $U$ of unit vectors and two
finite sets $B_-\subset B\subset\C$ with
\[
 B_-+U\subset B.
\]
Thus every $x\in B_-$ has a unit-distance neighbour $x+u\in B$
for every $u\in U$. We then take disjoint rotated copies of a
translate of $B$:
\[
 A=\bigcup_{u\in U}u(T+B)=U(T+B).
\]
The resulting counts have three useful features:
\begin{itemize}
\item There are $|U|$ disjoint copies, each retaining the
unit-distance pairs already present in $B$.
\item A product of two points of $A$ is determined by a product
from $U^2$ and two points of $B$. A small $U^2$ therefore gives
a saving in $|AA|$.
\item A sum has the form $T(u+v)+(ux+vy)$. We can bound the
number of possibilities for each term independently of the size of $T$.
\end{itemize}

The arithmetic construction supplies $U$ and a family of finite
sets from which we choose $B_-$ and $B$. Section~\ref{sec:input}
states exactly the properties needed, using only finite subsets of
$\C$ and labels in an integer cube. After accepting that input,
the proof in Sections~\ref{sec:directions}--\ref{sec:construction}
uses elementary counting. Appendix~\ref{app:arithmetic} explains
how the input follows from the cited number-theoretic constructions.

\subsection*{Relation to earlier work}

The number-field family and the labelled directions come from
\cite[Section~2, especially the proof of Lemma~2.2]{ABGLSSTWW}.
The lattice counting argument appears in
\cite[proof of Lemma~2.1]{ABGLSSTWW} and
\cite[Lemma~3.3]{BSSZ}. The construction $A=U(T+B)$ follows the
architecture $A=GP$ in \cite[Sections~2 and~4]{BSSZ}.
Their Corollary~1.3 already gives the two sum--product inequalities
over the reals. Our additional conclusion is that these inequalities
and many unit distances can hold for the same complex set.

Our starting observation was that the arithmetic directions also
have slow iterated sum--product growth. We retain that observation,
and the connection with multiplicative rank and the Subspace Theorem,
in Section~\ref{sec:observations}; neither is needed for the main proof.

\section{The arithmetic input in combinatorial form}\label{sec:input}

For finite sets $X,Y\subset\C$, write $XY=\{xy:x\in X,y\in Y\}$.
More generally, $X^{(k)}$ denotes the set of products of $k$ elements
of $X$, and $kX$ the set of sums of $k$ elements; repetitions are
allowed. We also write $X^2=XX$.

The next proposition records the arithmetic construction in a form
that can be used without number-theoretic terminology. It is a
consequence of the cited arguments, rather than a statement appearing
verbatim in either paper. In particular, the bounded multiplicity
in part~\textup{(iii)} is justified in Appendix~\ref{app:arithmetic}.

\begin{proposition}[Input from the arithmetic construction]
\label{prop:input}
There is a fixed constant $C>1$ and an integer $\ell\geq2$
such that, for arbitrarily large positive integers $f$, the following
objects exist simultaneously.

\smallskip
\noindent\textup{(i) Finite sets closed under controlled sums and products.}
For every positive integer $M$ and real $R\geq1$, there is a finite
set $W_M(R)\subset\C$. It contains $0$, is increasing with $R$,
and satisfies, for positive integers $M,N$ and $R,S\geq1$,
\begin{equation}\label{eq:window-operations}
 \begin{split}
 W_M(R)+W_M(S)&\subset W_M(R+S),\\
 W_M(R)W_N(S)&\subset W_{MN}(RS).
 \end{split}
\end{equation}

\smallskip
\noindent\textup{(ii) Size estimates.}
\begin{equation}\label{eq:lattice}
 C^{-f}(MR)^{2f}\leq |W_M(R)|\leq C^f(MR)^{2f}.
\end{equation}

\smallskip
\noindent\textup{(iii) Many labelled unit directions.}
For each positive integer $m$, there is a set
$S_m\subset\{0,\ldots,m\}^f$ and an injective assignment
$\mathbf a\mapsto u_{\mathbf a}$ with image $U_m$ such that
\[
 |S_m|\geq\left(\frac{m+1}{C}\right)^f,\qquad
 U_m\subset W_{\ell^{2m}}(1),\qquad
 |u|=1\ (u\in U_m),\qquad U_m\cap(-U_m)=\varnothing.
\]
For every positive integer $k$ and every $\mathbf q\in\Z^f$,
\begin{equation}\label{eq:label-fibres}
 \left|\left\{
 u_{\mathbf a_1}\cdots u_{\mathbf a_k}:
 \mathbf a_i\in S_m,\quad
 \mathbf a_1+\cdots+\mathbf a_k=\mathbf q
 \right\}\right|\leq C^f.
\end{equation}
The constants $C,\ell$ are independent of $f,m,k,M,R$.
\end{proposition}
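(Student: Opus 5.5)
We will derive Proposition~\ref{prop:input} from the number-field construction of \cite[Section~2]{ABGLSSTWW}. For arbitrarily large $f$, that construction gives a CM field $K$ of degree $2f$. It is a compositum of imaginary quadratic fields, or alternatively a field with bounded root discriminant $\rd(K)\leq C_0$. It comes with a fixed rational prime $\ell$ that splits into $f$ primes $\mathfrak p_1,\dots,\mathfrak p_f$ of $K^+$, each splitting further in $K$ as $\mathfrak P_i\overline{\mathfrak P_i}$. We fix an embedding $K\subset\C$ and a $\Z$-basis $\omega_1,\dots,\omega_{2f}$ of $\cO_K$ that is reduced with respect to the Minkowski embedding, so that $\max_\sigma|\sigma(\omega_j)|\leq C_1$.

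For part~(i) we set
\[
W_M(R)=\{x\in \tfrac{1}{M}\cO_K\text{-type set}: x\in\cO_K,\ |\sigma(x)|\leq MR\ \text{for all embeddings }\sigma\}.
\]
To get exact closure under products with index $MN$, it is cleaner to use the window $|\sigma(x)|\leq M R$ on $\cO_K$ itself. Then $W_M(R)+W_M(S)\subset W_M(R+S)$ follows from the triangle inequality. The product rule $W_M(R)W_N(S)\subset W_{MN}(RS)$ follows from multiplicativity of each $\sigma$. The set contains $0$ and increases with $R$.

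Part~(ii) is the lattice point count of \cite[Lemma~2.1]{ABGLSSTWW} and \cite[Lemma~3.3]{BSSZ}. The set $W_M(R)$ is $\cO_K$ intersected with a product of $f$ discs of radius $MR$ in $\C^f$, of volume $(\pi (MR)^2)^f$. The covolume of $\cO_K$ is $2^{-f}|\Disc K|^{1/2}\leq C_0^{f}$. Using the reduced basis, a fundamental parallelepiped has diameter $O(f)$ in each coordinate only up to a factor $C^f$. We will therefore use the standard sandwich argument: shrink and enlarge the discs by the parallelepiped, whose coordinate diameter is at most $2fC_1$. Rescaling $\omega_j$ so the cells fit, we get both bounds up to $C^f$ once $MR\geq1$. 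The case $MR$ near $1$ is handled by the bounds holding trivially with $0\in W$, after enlarging $C$. This step needs the most care, since the error terms must be exponential in $f$ and not $f^f$. We would try the cell-count argument with a basis whose coordinate sizes are $O(1)$; such a basis exists for the compositum of bounded quadratic fields, namely tensor products of quadratic integral bases.

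For part~(iii), choose $\pi_i\in\cO_K$ generating $\mathfrak P_i^{h}$. To avoid class number issues, use the generators supplied in \cite[proof of Lemma~2.2]{ABGLSSTWW}, with $|\sigma(\pi_i)|^2=\ell^{h}$ in bounded form. Put $\beta_i=\pi_i/\overline{\pi_i}$ and $u_{\mathbf a}=\prod\beta_i^{a_i}\cdot\prod\overline{\pi_i}^{m}\cdot\ell^{-?}$. It is simpler to take
\[
u_{\mathbf a}=\prod_i \pi_i^{a_i}\overline{\pi_i}^{\,m-a_i}\big/\ell^{hm/2}\cdot \ell^{hm/2}/\ell^{hm/2},
\]
that is, elements $z_{\mathbf a}=\prod\pi_i^{a_i}\overline{\pi_i}^{m-a_i}$ of absolute value $\ell^{hfm/2}$ in every embedding. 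Then $u_{\mathbf a}=z_{\mathbf a}/|z_{\mathbf a}|$ is a unit vector. We absorb the normalisation into $\ell$ so that the index is $\ell^{2m}$ (replacing $\ell$ by a power of it), and obtain $U_m\subset W_{\ell^{2m}}(1)$.

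Injectivity follows from unique factorisation of ideals. The condition $U_m\cap(-U_m)=\varnothing$ is arranged by discarding at most half the labels, using a sign-splitting and keeping one of each $\pm$ pair. The lower bound on $|S_m|$ holds because we may take all of $\{0,\dots,m\}^f$ before this halving, which costs only a factor $2\leq C^f$.

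For the fibre bound~\eqref{eq:label-fibres}, the ideal generated by $u_{\mathbf a_1}\cdots u_{\mathbf a_k}$, after clearing the common scalar, is $\prod\mathfrak P_i^{hq_i}\overline{\mathfrak P_i}^{h(km-q_i)}$. This depends only on $\mathbf q$. So two such products with the same $\mathbf q$ differ by a unit of $\cO_K$ of absolute value $1$ in every embedding, that is, by a root of unity in $K$. The number of roots of unity in a CM field of degree $2f$ is at most $C^f$, indeed bounded polynomially in $f$, which gives the bound with constant independent of $k$ and $m$.

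The main obstacle is making the normalisation consistent with the windows $W_M$. The $u$'s are not algebraic integers, so the scaling must put them in $W_{\ell^{2m}}(1)$. We would handle this by defining $W_M(R)$ as $\{x/M: x\in\cO_K,\ |\sigma(x)|\leq MR\}$ with $M$ a power of $\ell$. The product rule still holds, and the counts in part~(ii) are unchanged.
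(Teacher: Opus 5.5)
There is a genuine gap in part (iii). Your final definition $W_M(R)=\{x/M: x\in\cO_K,\ |\sigma(x)|\le MR\}$ is the paper's, and part (i) is fine. But the directions you build do not lie in these sets, for two reasons.

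First, $u_{\mathbf a}=z_{\mathbf a}/|z_{\mathbf a}|$ is not an element of $K$. The normaliser $|\sigma_0(z_{\mathbf a})|=\prod_i|\sigma_0(\pi_i)|^m$ is a real number that is generally not in $K$. For example, in $\mathbb Q(i)$, $(2+i)/\sqrt5\notin\mathbb Q(i)$. Your claim $|\sigma(\pi_i)|^2=\ell^h$ is also false: $\pi_i\overline{\pi_i}$ generates $\mathfrak p_i^h$ in $K^+$, and only the product of its $f$ conjugates equals $\ell^h$. So $u_{\mathbf a}$ lies in no $W_M(R)$, and its other conjugates (needed for $\|u\|_\infty\le1$) are meaningless. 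Your ideal-theoretic arguments for injectivity and for the fibre bound do not apply to it. The correct normalisation is $\alpha/\overline\alpha$, which lies in $K$ and has modulus one in every embedding.

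Second, even after that repair, taking $\pi_i$ to generate $\mathfrak P_i^h$ multiplies all exponents by $h$. You then only get $U_m\subset W_{\ell^{hm}}(1)$. Here $h$ (the class number, or the order of $[\mathfrak P_i]$) is not bounded independently of $f$, so ``replacing $\ell$ by a power'' makes $\ell$ depend on $f$. The proposition forbids this, and the main proof cannot absorb it: with $D=\ell^{hm}$, the factor $D^{6f}$ in $|W_D(2)|\,|W_{D^2}(2R)|$ is no longer $e^{O(f)}$.

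The missing idea is the class-group pigeonhole. This is exactly where $h_K\le C^f$ enters, and it is why only $|S_m|\ge((m+1)/C)^f$ is claimed rather than the full cube; you cannot ``avoid class number issues'' for free. Sort the $(m+1)^f$ ideals $I_{\mathbf a}=\prod_i\mathfrak P_i^{a_i}\overline{\mathfrak P_i}^{\,m-a_i}$ by ideal class. Let $S_m$ be the labels in a most popular class, so $|S_m|\ge (m+1)^f/h_K$. Fix $\mathbf b\in S_m$, let $\alpha$ generate $I_{\mathbf a}I_{\mathbf b}^{-1}$, and set $u_{\mathbf a}=\alpha/\overline\alpha$. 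Then $(u_{\mathbf a})=\prod_i\mathfrak P_i^{2(a_i-b_i)}\overline{\mathfrak P_i}^{\,-2(a_i-b_i)}$ with $|a_i-b_i|\le m$, so $\ell^{2m}u_{\mathbf a}\in\cO_K$ with $\ell$ fixed. Your root-of-unity argument then gives the fibre bound, since products with equal label sum generate the same ideal.

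Separately, your part (ii) rests on a $\Z$-basis of $\cO_K$ with all conjugates $O(1)$. This is unjustified for bounded root discriminant, since Minkowski's second theorem controls only the product of the successive minima. Composita of quadratic fields of growing degree do not help: they have unbounded root discriminant, which already breaks the lower bound in (ii). No basis is needed. Nonzero elements of $M^{-1}\cO_K$ have $\|\cdot\|_\infty\ge1/M$, because $|N(Mx)|\ge1$. Hence disjoint polydiscs of radius $1/(2M)$ give $|W_M(R)|\le(2MR+1)^{2f}$. For the lower bound, average translates of a polydisc of radius $R/2$ against the covolume $M^{-2f}2^{-f}|\Disc K|^{1/2}$, then subtract one of the points found.
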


The parameter $f$ will tend to infinity and make our sets large.
The integer $m$ will first be fixed to ensure enough directions and
a product saving; $R$ will then control the sumset exponent.
The index $M$ keeps track of the change in the family under products.
All the sets in the proposition are already subsets of the ordinary
complex plane.

The labels in part~(iii) need not fill the cube. What matters is
that there are many labels, and that products associated with one
label sum have bounded multiplicity. This counts \emph{distinct
product values}, not the number of tuples giving a product.
For example, in two coordinates with $m=2$, a sum of two labels
lies in $\{0,\ldots,4\}^2$, which has only $25$ elements.
In general there are $(2m+1)^f$ possible sums of two labels.

\section{Counting products of directions}\label{sec:directions}

We suppress fixed numerical constants. In the proof below, $O(f)$
means a quantity bounded in absolute value by a fixed constant times
$f$. Once the direction parameter $m$ is chosen, these constants
are independent of $R$, $f$, and $\varepsilon$.

\begin{lemma}\label{lem:directions}
For every fixed $H>0$, we can choose $m$ and $D=\ell^{2m}$,
independently of $f$, so that the directions $U=U_m\subset W_D(1)$
satisfy
\[
 |U|\geq e^{Hf},\qquad |U^2|\leq e^{-Hf}|U|^2.
\]
\end{lemma}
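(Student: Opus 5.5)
We apply Proposition~\ref{prop:input}(iii) with a value of $m$ to be fixed in terms of $H$ and the constant $C$, and take $U=U_m$ and $D=\ell^{2m}$. The inclusion $U\subset W_D(1)$ is then given directly by the proposition. It remains to prove the two counting inequalities, both of which should follow from comparing powers of $m+1$ with powers of $C$.

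\emph{Lower bound on $|U|$.} The assignment $\mathbf a\mapsto u_{\mathbf a}$ is injective, so
\[
 |U|=|S_m|\geq\left(\frac{m+1}{C}\right)^f.
\]
It therefore suffices to choose $m$ with $\log\frac{m+1}{C}\geq H$, that is, $m+1\geq Ce^{H}$.

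\emph{Upper bound on $|U^2|$.} Every element of $U^2$ has the form $u_{\mathbf a_1}u_{\mathbf a_2}$ with $\mathbf a_1,\mathbf a_2\in S_m$. Its label sum $\mathbf q=\mathbf a_1+\mathbf a_2$ lies in $\{0,\ldots,2m\}^f$, and this set has $(2m+1)^f$ elements. By \eqref{eq:label-fibres} with $k=2$, each $\mathbf q$ accounts for at most $C^f$ distinct product values. Summing over $\mathbf q$ gives
\[
 |U^2|\leq C^f(2m+1)^f.
\]
We compare this with $|U|^2\geq (m+1)^{2f}C^{-2f}$:
\[
 \frac{|U^2|}{|U|^2}\leq\left(\frac{C^3(2m+1)}{(m+1)^2}\right)^f\leq\left(\frac{2C^3}{m+1}\right)^f.
\]
It suffices to have $m+1\geq 2C^3e^{H}$.

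Set $m=\lceil 2C^3e^{H}\rceil$. This satisfies both conditions, since $C>1$ gives $2C^3e^H\geq Ce^H$. The resulting $m$ depends only on $H$ and $C$, and not on $f$.

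The only delicate point is the bookkeeping. The saving comes from the fact that label sums range over only about $m^f$ values, while pairs of labels number about $m^{2f}$. The bounded fibre multiplicity \eqref{eq:label-fibres} is exactly what turns this into a bound on distinct products. The losses of $C^{\pm f}$ are absorbed by taking $m$ large relative to $C$, and this works because $C$ does not depend on $m$. No further obstacle is expected.
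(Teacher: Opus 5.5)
Your proof is correct and follows the paper's argument exactly: injectivity gives $|U|=|S_m|\geq((m+1)/C)^f$, and the $(2m+1)^f$ label sums, each with at most $C^f$ distinct products, give the ratio bound $\bigl(C^3(2m+1)/(m+1)^2\bigr)^f$. Your explicit choice $m=\lceil 2C^3e^H\rceil$ just makes concrete the paper's ``choose $m$ sufficiently large in terms of $H$.''
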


\begin{proof}
There are at most $(2m+1)^f$ sums of two labels, each accounting
for at most $C^f$ distinct products. Thus
\[
 |U|\geq\left(\frac{m+1}{C}\right)^f,\qquad
 \frac{|U^2|}{|U|^2}
 \leq\left(\frac{C^3(2m+1)}{(m+1)^2}\right)^f.
\]
As $m\to\infty$, the base in the first bound tends to infinity
and the base in the second tends to zero. Choose $m$ sufficiently
large in terms of $H$.
\end{proof}

\section{The simultaneous construction}\label{sec:construction}

For $R\geq2$, write $B=W_D(R)$ and $B_-=W_D(R-1)$.
The input gives
\[
 B_-+U\subset B.
\]
Moreover, because $(R-1)/R\geq1/2$, its size bounds give a
constant $\kappa>0$, independent of $D$ and $R$, such that
\begin{equation}\label{eq:inner-ratio}
 |B_-|\geq e^{-\kappa f}|B|.
\end{equation}
Choose $H>\kappa+1$ in Lemma~\ref{lem:directions} and fix the
resulting $m,D,U$. From now on only $R$ and $f$ vary.

Choose an integer $T$ so that the sets $u(T+B)$, $u\in U$,
are pairwise disjoint, and put
\[
 A=U(T+B),\qquad n=|A|=|U||B|.
\]
Such a $T$ exists: a collision $u(T+x)=v(T+y)$ with $u\ne v$
forces $T=(vy-ux)/(u-v)$, excluding only finitely many values.
Since $D,m$ are fixed, the input bounds imply
\begin{equation}\label{eq:size-bounds}
 \log n=2f\log R+O(f).
\end{equation}
We now count unit-distance pairs, products, and sums.

\medskip
\noindent\emph{Unit distances.}
Every $(x,v)\in B_-\times U$ gives the pair $\{x,x+v\}$ in $B$.
These unordered pairs are distinct because $U\cap(-U)=\varnothing$.
Each of the $|U|$ disjoint rotated copies of $T+B$ retains them.
Consequently,
\[
 \nu_1(A)\geq |U|^2|B_-|
 \geq e^{(H-\kappa)f}|U||B|
 \geq e^f n.
\]

\medskip
\noindent\emph{Products.}
A product from $A$ is determined by an element of $U^2$ and two
elements of $B$. Hence
\[
 |AA|\leq |U^2||B|^2\leq e^{-Hf}n^2\leq e^{-f}n^2.
\]

\medskip
\noindent\emph{Sums.}
The identity
\[
 u(T+x)+v(T+y)=T(u+v)+(ux+vy)
\]
gives
\[
 \begin{aligned}
 |A+A|&\leq |U+U|\,|UB+UB|\\
 &\leq |W_D(2)|\,|W_{D^2}(2R)|\\
 &\leq e^{O(f)}R^{2f}
 =e^{O(f)}n.
 \end{aligned}
\]
The middle line uses the addition and multiplication rules from
Proposition~\ref{prop:input}. In particular, the bound is independent
of $T$.

\begin{proof}[Proof of Theorem~\ref{thm:main}]
We have established
\begin{equation}\label{eq:uniform-bounds}
 \log n=2f\log R+O(f),\qquad
 |A+A|\leq e^{O(f)}n,\qquad
 |AA|\leq e^{-f}n^2,\qquad
 \nu_1(A)\geq e^f n.
\end{equation}
For $0<\varepsilon\leq1$, choose
\[
 R=\exp(K/\varepsilon),
\]
where $K$ is a sufficiently large fixed constant. Then
$\log n\asymp f/\varepsilon$, uniformly in $\varepsilon$.
Choosing $K$ large enough absorbs the factor $e^{O(f)}$ in the
sumset bound into $n^\varepsilon$. Also
$f\geq c\varepsilon\log n$ for some absolute $c>0$, so
\[
 |A+A|\leq n^{1+\varepsilon},\qquad
 |AA|\leq n^{2-c\varepsilon},\qquad
 \nu_1(A)\geq n^{1+c\varepsilon}.
\]
With $R$ fixed, let $f\to\infty$ through the values supplied by
Proposition~\ref{prop:input}. Then $n\to\infty$.
\end{proof}

\begin{proof}[Proof of Corollary~\ref{cor:symmetric}]
Apply the theorem with $\varepsilon=1/2$ and choose
$0<c_0<\min\{1/2,c/2\}$. For sufficiently large $n$,
$n^{3/2}+n^{2-c/2}\leq n^{2-c_0}$, while the unit-distance
bound is at least $n^{1+c_0}$.
\end{proof}

\section{Additional observations}\label{sec:observations}

\subsection{Slow iterated sum--product growth}

The labelled directions themselves give the same order of exponent
as \cite[Theorem~1.4]{BSSZ}, now on the complex unit circle.
For every fixed integer $k\geq3$, there are arbitrarily large
finite $U\subset\{z\in\C:|z|=1\}$ with
\[
 \max\{|kU|,|U^{(k)}|\}
 \leq |U|^{C_*\log k/\log\log k},
\]
where $C_*$ is an absolute constant.

\begin{proof}
Use $U=U_t$ from Proposition~\ref{prop:input}, where $t$ is a
sufficiently large positive integer. Repeated addition in part~(i)
and the size estimate in part~(ii) give
\[
 |U|\geq\left(\frac{t+1}{C}\right)^f,\qquad
 |kU|\leq |W_{\ell^{2t}}(k)|\leq C^f(k\ell^{2t})^{2f}.
\]
There are at most $(kt+1)^f$ sums of $k$ labels from
$\{0,\ldots,t\}^f$. Part~(iii) therefore gives
\[
 |U^{(k)}|\leq C^f(kt+1)^f.
\]
Consequently, with constants independent of $f,t,k$,
\[
 \log|U|\gg f\log t,\qquad
 \log|kU|\ll f(t+\log k),\qquad
 \log|U^{(k)}|\ll f\log(kt+1).
\]
For large $k$, take $t\asymp\log k$. Dividing the latter two
bounds by the first gives the claimed exponent.
For the remaining bounded values of $k$, fix a sufficiently large
$t$ and enlarge $C_*$. For each fixed $k$, letting $f\to\infty$
gives arbitrarily large $U$.
\end{proof}

These bounds motivated the simultaneous construction. However,
the directions alone have a large two-fold sumset, so we needed a modified argument.
\[
 |U+U|=\frac{|U|(|U|+1)}2.
\]

\subsection{Multiplicative rank and unit distances}

The multiplicative rank of a finite set of nonzero complex numbers
is the torsion-free rank of the group it generates. It counts the
independent multiplicative generators after roots of unity are
ignored. The result discussed in
\cite[Section~3.2]{Subspace} says that for every $\eta>0$ there is
$c_\eta>0$ such that, for all sufficiently large $n$, the number
of unit-distance pairs with directions in a multiplicative group
of rank less than $c_\eta\log n$ is less than $n^{1+\eta}$.
Thus rank $o(\log n)$ gives at most $n^{1+o(1)}$ such pairs, and
a construction with $n^{1+\eta}$ unit-distance pairs requires rank
$\Omega_\eta(\log n)$ for its full direction set.

In the classical lattice construction of Erd\H{o}s, the
multiplicative rank of the unit directions is
$O(\log n/\log\log n)$, whereas the additive group generated by
the points has rank two. See \cite[Section~3]{Erdos1946} for the
construction and \cite[Section~3.2]{Subspace} for the rank analysis.
The selected directions $U$ used here have rank $\Theta(f)$, hence
$\Theta(\log n)$ when $R$ is fixed. The short verification is in
Remark~\ref{rem:rank}. It would be interesting to see the
multiplicative properties of the unit directions in constructions with many unit distances.

\subsection{Further question}

The product saving and the unit-distance gain are both proportional
to $\varepsilon$. What is the best simultaneous tradeoff between
these gains under the constraint $|A+A|\leq|A|^{1+\varepsilon}$?
The present argument establishes positive constants but does not
optimize them. More precise rank estimates for constructions with
many unit distances would also be of interest.

\section*{Acknowledgments and disclosure}

This note arose from the author's efforts to understand the
combinatorial consequences of the constructions in
\cite{ABGLSSTWW,BSSZ}. 
OpenAI's ChatGPT assisted with the development and checking of the
construction, literature searches, and the organization of
this manuscript. The author is responsible for the mathematical claims,
references, and final text. 
The author was supported in part by an NSERC Discovery Grant
and by the National Research, Development and Innovation Office
of Hungary (NKFIH), Grant No.~KKP133819 and Excellence~151341.

\appendix
\section{Justification of the arithmetic input}\label{app:arithmetic}

This appendix connects Proposition~\ref{prop:input} to the source
papers and verifies the bounded multiplicity of products. The main
proof only uses the proposition's stated properties.

\subsection{The fields and finite sets}

A CM field is a totally imaginary quadratic extension of a totally
real number field. Let $K$ be such a field of degree $2f$, and let
$\cO_K$ be its ring of integers. Choose one embedding
$\sigma_j:K\hookrightarrow\C$ from each conjugate pair.
The CM involution $x\mapsto\overline x$ acts as complex
conjugation in every embedding. Put
\[
 \|x\|_\infty=\max_j|\sigma_j(x)|.
\]
The proof of \cite[Theorem~1.1, Section~2]{ABGLSSTWW} supplies
constants $\rho,C>1$, a rational prime $\ell$, and such fields
of arbitrarily large degree with
\begin{equation}\label{eq:arithmetic-input}
 \rd(K)\leq\rho,\qquad h_K\leq C^f,\qquad
 \ell\text{ splits completely in }K.
\end{equation}
Here $\rd(K)=|\Disc K|^{1/(2f)}$ is the root discriminant and
$h_K$ is the class number. The existence of this family and its
class-number bound are used directly from the cited proof.

Fix an embedding $\sigma_0:K\hookrightarrow\C$ and set
\[
 W_M(R)=\sigma_0\bigl(\{x\in M^{-1}\cO_K:
                                  \|x\|_\infty\leq R\}\bigr).
\]
An embedding is injective and preserves addition and multiplication.
The inclusions in Proposition~\ref{prop:input}(i) follow from the
triangle inequality, multiplicativity of the embeddings, and the
fact that $\cO_K$ is a ring. Containment of zero and nesting are
immediate.

The size estimates in part~(ii) follow from the lattice arguments
in \cite[proof of Lemma~2.1]{ABGLSSTWW} and
\cite[Lemma~3.3]{BSSZ}. The full image of $M^{-1}\cO_K$ in
$\C^f$ has covolume $M^{-2f}2^{-f}|\Disc K|^{1/2}$ and is
$M^{-1}$-separated in the maximum norm. Averaging translates of a
polydisc of radius $R/2$, then subtracting one of the points found,
gives the lower bound. Packing disjoint polydiscs of radius
$1/(2M)$ gives the upper bound. Since the root discriminant is
bounded, for $R\geq1$ the resulting estimates have the form
\[
 C^{-f}(MR)^{2f}\leq |W_M(R)|\leq C^f(MR)^{2f}
\]
after enlarging the fixed constant $C$. We use this enlarged
constant also in the class-number bound.

\subsection{Directions and their labels}

Write the conjugate pairs of prime ideals above $\ell$ as
$\mathfrak p_j,\overline{\mathfrak p}_j$, $1\leq j\leq f$.
Apply the construction in \cite[proof of Lemma~2.2]{ABGLSSTWW}
with all exponent parameters equal to $m$. Its class-group
pigeonhole argument gives $S_m\subset\{0,\ldots,m\}^f$, a fixed
$\mathbf b\in S_m$, and elements $u_{\mathbf a}\in K$ such that
\[
 |S_m|\geq\frac{(m+1)^f}{h_K},\qquad
 u_{\mathbf a}\overline{u}_{\mathbf a}=1,\qquad
 u_{\mathbf a}\in\ell^{-2m}\cO_K,
\]
and
\begin{equation}\label{eq:valuations}
 (u_{\mathbf a})=
 \prod_{j=1}^f\mathfrak p_j^{2(a_j-b_j)}
             \overline{\mathfrak p}_j^{-2(a_j-b_j)}.
\end{equation}
Here $(u)$ denotes the principal fractional ideal generated by $u$.
The displayed formula is the output of the cited proof: the ideal
ratio indexed by $\mathbf a,\mathbf b$ has exponents $a_j-b_j$,
and replacing a generator $\alpha$ by
$\alpha/\overline\alpha$ doubles these exponents.

Distinct labels give distinct ideals and hence distinct elements.
Also $U_m\cap(-U_m)=\varnothing$, because $u$ and $-u$ generate
the same ideal. The identity $u\overline u=1$ gives modulus one
under every embedding. After applying $\sigma_0$, these are the
directions in Proposition~\ref{prop:input}(iii).

It remains to check \eqref{eq:label-fibres}; this additional count
is not asserted as a separate result in the cited lemma.
Multiplication adds ideal exponents. For $k$ factors with label
sum $\mathbf q$, the exponent at $\mathfrak p_j$ is
$2(q_j-kb_j)$ and the conjugate exponent is its negative.
Thus all these products generate the same principal fractional
ideal. Fix one product $z_0$. For every other distinct product
$z$ with this ideal, $z/z_0$ is an algebraic unit whose conjugates
all have modulus one. These distinct quotients belong to the set
counted by $W_1(1)$, which has at most $C^f$ elements by part~(ii).
This proves the claimed multiplicity bound and completes the
justification of Proposition~\ref{prop:input}.

\subsection{Rank of the selected directions}

\begin{remark}\label{rem:rank}
Fix $m$ as in Lemma~\ref{lem:directions} and let
$\Gamma=\langle U_m\rangle$. The exponents at
$\mathfrak p_1,\ldots,\mathfrak p_f$ define a homomorphism
$\Gamma\to\Z^f$. Its kernel consists of algebraic units whose
conjugates all have modulus one, and is finite by the bound on
$W_1(1)$. Hence the rank $r$ of $\Gamma$ is the dimension of the
rational span of $2(\mathbf a-\mathbf b)$, $\mathbf a\in S_m$.
In particular, $r\leq f$.

On this span some projection onto $r$ coordinates is injective.
Each selected coordinate takes at most $m+1$ values, so
\[
 e^{Hf}\leq |U_m|=|S_m|\leq(m+1)^r,
 \qquad r\geq\frac{Hf}{\log(m+1)}.
\]
Thus $r=\Theta(f)$. For fixed $R\geq2$, the size bounds and $|U|\geq e^{Hf}$ give
$\log n=\Theta(f)$, as claimed in Section~\ref{sec:observations}.
\end{remark}

\end{document}